\newtheorem{Th}{Theorem}[section]
\newtheorem{Lem}[Th]{Lemma}
\newtheorem{Cor}[Th]{Corollary}
\newtheorem{Prop}[Th]{Proposition}
\newtheorem{Def}[Th]{Definition}
\newtheorem{Not}[Th]{Notation}
\newtheorem{Rem}[Th]{Remark}
\newtheorem{RemDef}[Th]{Remark and definition}
\newtheorem{Ex}[Th]{Example}
\def\cM{\mathcal M }
\def\cI{\mathcal I }
\renewcommand{\P}{\mathbb{P}}
\newcommand{\R}{\mathbb{R}}
\def\Sing{\mbox{Sing}}
\def\rank{\mbox{rank}}
\def\Hom{\mbox{Hom}}
\title{Maximal rationally connected fibrations and movable curves}
\author{Luis E. Sol\'a Conde \and Matei Toma}
\date{\today}
\thanks{The authors wish to thank the
Max-Planck-Institut f\"ur Mathematik in Bonn and
the Institut Elie Cartan Nancy for hospitality and financial support during the preparation of this paper. The collaboration between the authors was partially supported by the project MTM2006-04785 of the Spanish Government.
AMS Classification (2000): 14J99, 14D99; secondary: 53C12.}
\begin{document}
\maketitle

\begin{abstract}
A well known result of Miyaoka asserts that a complex projective manifold is uniruled if its cotangent bundle restricted to a general complete intersection curve is not nef. Using the Harder-Narasimhan filtration of the tangent bundle, it can moreover be shown that the choice of such a curve gives rise to a rationally connected foliation of the manifold. In this note we show that, conversely, a movable curve can be found so that the maximal rationally connected fibration of the manifold may be recovered as a term of the associated Harder-Narasimhan filtration of the tangent bundle.
\end{abstract}

\noindent

\section{Introduction}
Since the 1980's it has become evident that rational curves play a central role in birational algebraic geometry, bringing uniruled varieties -that is, varieties covered by rational curves- into the area of interest of many researchers. 

Several characterizations of uniruledness may be found in the literature, all of them relating this property with positivity properties of the tangent bundle. In 1987 Miyaoka and Mori proved that a complex projective manifold $X$ is uniruled if and only if $K_X$ is negative on a curve $C$ passing by $x$ for general $x\in X$ (cf. \cite{MM}). This result has been recently improved by Boucksom, Demailly, P\u{a}un and Peternell, who have proved in \cite{BDPP} that in fact $X$ is uniruled if and only if $K_X$ is not pseudoeffective.

Also in 1987, Miyaoka provided a more geometric criterion of uniruledness: a complex projective manifold $X$ is uniruled if the restriction of $T_X$ to a general complete intersection curve has a vector subbundle of positive degree (\cite{Mi}, see also \cite{KST}). In fact, under the above hypotheses, the lower terms of the Harder-Narasimhan filtration of $T_X$ with respect to the chosen complete intersection curve are rationally connected foliations. 

On the other side, one may associate to a uniruled variety $X$ its maximally rationally connected fibration (MRCF for brevity), constructed by Campana and by Koll\'ar, Miyaoka and Mori independently, see \cite{C} and \cite{KMM}. 

In this note we address the question whether the MRCF of $X$ may be recovered as one of the foliations provided by Miyaoka's construction for a suitably chosen complete intersection curve. Unfortunately it is not even clear whether one may always find a complete intersection curve on which $K_X$ has negative degree (see Example \ref{ex:surf}), so we have decided to work with a broader class of curves. 

Given a complex projective manifold $X$ we will denote by $N_{\mbox{\tiny amp}}$ the interior of the closed cone $\overline{\mbox{ME}(X)}$ generated by movable curves,
see \cite[Def.~1.3]{BDPP}.
One has a notion of slope and stability of torsion free sheaves on $X$ with respect to a class in  $\overline{\mbox{ME}(X)}$,
cf. \cite{CP}. Our main result is the following

\begin{Th}\label{th:main}
Let $X$ be a uniruled complex projective manifold, and let $G\subset T_X$ denote the foliation associated with its MRCF. There exists a class $\alpha\in N_{\mbox{\rm\tiny amp}}$ represented by a reduced movable curve $C$ verifying that
$G|_C$ is  ample and $G$ is a member of the Harder-Narasimhan filtration of  $T_X$ with respect to $\alpha$.
\end{Th}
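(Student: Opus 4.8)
The plan is to produce a movable class $\alpha$ on which the MRCF foliation $G\subset T_X$ becomes the maximal destabilizing subsheaf (or at least a term of the Harder-Narasimhan filtration), while simultaneously ensuring $G|_C$ is ample on a representing curve $C$. Let me think about how to construct such a class.

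**Setting up the geometry.** Let $\pi: X' \to Y$ be a model of the MRCF, so that $G$ is the saturated subsheaf of $T_X$ whose restriction to the general fiber is the tangent bundle of the fiber; the general fiber $F$ is rationally connected of dimension $r = \operatorname{rank} G$. The key positivity input is that on a rationally connected variety the tangent bundle is positive along rational curves: through a general point of $F$ there pass many free (indeed very free) rational curves, and $T_F = G|_F$ restricted to such curves has positive degree. This is exactly the mechanism that should make $G$ a destabilizing subsheaf.

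**Constructing the movable class.** The natural idea is to build $\alpha$ as a positive combination of two ingredients. First, a "fiber-sweeping" family of very free rational curves $\{R_t\}$ contained in the fibers of the MRCF: these are movable (they cover $X$ as $t$ and the fiber vary) and have the feature that $G\cdot [R_t] > 0$ while the quotient $T_X/G$ restricted to $R_t$ has smaller or comparable degree — the point is that the positivity of $\alpha$ is concentrated in the $G$-direction. Second, possibly a movable class coming from an ample complete-intersection curve on $X$ (or pulled back from $Y$) to guarantee that $\alpha$ lands in the \emph{interior} $N_{\mbox{\tiny amp}}$ rather than merely on the boundary of $\overline{\mbox{ME}(X)}$, and to control the slopes of the other graded pieces. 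The precise statement to prove is: for the movable class $\alpha = \lambda[\text{c.i. curve}] + \mu[R_t]$ with $\mu \gg \lambda > 0$ suitably chosen, one has $\mu_\alpha(G) > \mu_\alpha(T_X)$ and $G$ is saturated with $T_X/G$ of strictly smaller $\alpha$-slope, forcing $G$ to appear in the HN filtration.

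**Verifying ampleness and the HN position.** To get $G|_C$ ample for a representing reduced curve $C$, I would choose $C$ as a general member of the family sweeping the fibers by very free rational curves, so that $G|_C \cong (T_F)|_C$ splits with only positive summands by the very-freeness. The statement that $G$ is a \emph{term} of the HN filtration then requires comparing $\alpha$-slopes of all saturated subsheaves; the rational connectedness of the fibers should guarantee that any subsheaf of $T_X$ properly containing $G$ (or lying "below" it) has strictly smaller $\alpha$-slope because the extra positivity of $\alpha$ lives entirely in the fiber directions captured by $G$. Conversely, subsheaves of $G$ are controlled by the semistability, or near-semistability, of $T_F$ along the chosen curves.

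**The main obstacle.** The hard part will be the slope comparison: I must ensure that with the chosen $\alpha$, the foliation $G$ is not merely one destabilizing subsheaf among several, but precisely lands as a graded term of the HN filtration — i.e.\ that no subsheaf of $T_X$ has $\alpha$-slope strictly between $\mu_\alpha(G)$ and the slope of a finer piece, and that $T_X/G$ (which encodes the "base direction" $Y$, a non-uniruled or at least MRC-trivial base) has strictly smaller $\alpha$-slope. Controlling $T_X/G$ amounts to showing that along the movable class $\alpha$ the base $Y$ contributes no positivity — this is where one needs the defining property of the MRCF, namely that $Y$ is not uniruled (Graber–Harris–Starr), so $T_X/G$ cannot acquire positive $\alpha$-degree for the fiber-concentrated part of $\alpha$. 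Making the inequalities strict and simultaneously keeping $\alpha$ in the open cone $N_{\mbox{\tiny amp}}$ is the delicate balancing act I expect to occupy the bulk of the argument.
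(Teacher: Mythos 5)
Your overall strategy points in the right direction (concentrate the positivity of $\alpha$ in the fiber directions of the MRCF, then use Graber--Harris--Starr to rule out positivity in the base direction), but there are two concrete gaps. The first concerns the representing curve. The theorem asks for a \emph{single reduced movable curve} $C$ with $[C]\in N_{\mbox{\tiny amp}}$ and $G|_C$ ample. You propose the class $\alpha=\lambda[\mbox{c.i.\ curve}]+\mu[R_t]$ but then take $C$ to be a very free rational curve $R_t$ in a fiber; when $X$ is not rationally connected, $[R_t]$ has zero intersection with ample divisors pulled back from the base, so it lies on the boundary of $\overline{\mbox{ME}(X)}$ and cannot represent a class of $N_{\mbox{\tiny amp}}$. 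A formal positive combination of classes does not hand you an irreducible curve on which ``$G|_C$ ample'' makes sense, and a disjoint union $D\cup R_1\cup\dots\cup R_m$ does not work either, since ampleness on such a curve would require $G|_D$ ample, which is unknown. The paper resolves this by smoothing a \emph{comb} with handle a general complete intersection curve $D$ and $m$ teeth $R_i$ (free rational curves with $G|_{R_i}$ ample in distinct fibers, supplied by the rationally connected case); the smoothed curve $C_t$ is irreducible, free, and its class lies in $N_{\mbox{\tiny amp}}$. Even then, the ampleness of $G|_{C_t}$ is not automatic: it is proved by a semicontinuity argument using the relative Harder--Narasimhan filtration of $q^*G$ on the smoothing family, yielding the estimate $\deg_{C_t}(G')\geq \mu_{\mbox{\tiny min}}\big(\bigwedge^r G\big)|_D+\frac{m}{r}>0$ for $m\gg 0$. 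This step is entirely absent from your proposal.

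The second gap is the mechanism by which $G$ lands in the Harder--Narasimhan filtration. Arranging $\mu_\alpha(G)>\mu_\alpha(T_X)$ only makes $G$ \emph{a} destabilizing subsheaf, not a term of the filtration, and your stated worry about comparing slopes of all intermediate subsheaves is exactly the difficulty your argument does not overcome. The paper's route avoids this comparison altogether: once $G|_C$ is ample and $C$ avoids the relevant singular loci, $G\subset F_s$ where $F_s$ is the last term with positive slope (Lemma \ref{lem:contamp}); and if $G\subsetneq F_s$, the quotient foliation descends to a foliation of positive $\alpha$-degree on the base of the MRCF, which by the Campana--Peternell criterion would make the base uniruled, contradicting \cite{GHS} (Lemma \ref{lem:notcontamp}). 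You cite GHS in roughly the right place, but without the descent of $F_s/G$ to the base and the uniruledness criterion for movable classes, the conclusion $G=F_s$ does not follow from slope bookkeeping alone.
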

As a corollary we get a positive answer to our initial question when $\dim X=2$.

\vspace{0.3cm}

\noindent{\bf Acknowledgement:} We wish to thank F. Campana for useful discussions.

\section{Preliminaries}

Along this paper $X$ will denote a complex projective manifold of dimension at least two.

In this section we first review known facts about stability with respect to movable classes. 
We refer the interested reader to \cite{CP} for further details. Finally we discuss the relation of that concept with 
the maximal rationally connected fibration of $X$.

\begin{Def}\label{def:mov}{\rm A curve $C\subset X$ is called {\it movable} if there exists an irreducible algebraic family of curves containing $C$ as a reduced member and   
dominating $X$. As usual, we denote by $N_1(X)$ the vector space of numerical classes of real $1$-cycles on $X$ and by $N^1(X)$ its dual. The closure in $N_1(X)$ of the convex cone generated by classes of movable curves 
will be denoted by $\overline{\mbox{ME}(X)}$, and its interior by $N_{\mbox{\tiny amp}}$. The elements of $\overline{\mbox{ME}(X)}$ are called {\it movable classes}.}
\end{Def}

By a theorem of Boucksom, Demailly, P\u{a}un and 
Peternell (cf. \cite{BDPP}), $\overline{\mbox{ME}(X)}$ equals the dual of the {\it pseudoeffective} cone, i.e. 
the closure in $N^1(X)$ of the convex cone generated by classes of effective divisors. The pseudoeffective cone 
may also be described as the cone of classes of $\R$-divisors represented by positive closed $(1,1)$-currents on $X$.

\begin{Def}\label{def:weakfree}
{\rm A movable class $\alpha$ is called {\it weakly free} if given any codimension $2$ subset $B\subset X$ there exists a representative $C$ of $\alpha$ verifying $C\cap B=\emptyset$.}
\end{Def}

\begin{Rem}\label{rem:free}
 {\rm A smooth curve $C\subset X$ is usually called {\it free} (cf. \cite[II.3]{Kol}) if $H^1(C,T_X|_C)=0$ and $T_X|_C$ is globally generated. By \cite[II.3.7]{Kol}, the class of a free curve $C$ is weakly free.}
\end{Rem}

We may associate a notion of stability to every movable class $\alpha$.

\begin{Def}\label{def:slope}
{\rm Given a movable class $\alpha\in\overline{\mbox{ME}(X)}$ and a torsion free sheaf $E$ on $X$, we define the slope of $E$ 
with respect to $\alpha$ in the usual way:
$$
\mu_{\alpha}(E):=\dfrac{c_1(E)\cdot\alpha}{\rank(E)}.
$$
When there is
no possible confusion about the class we are using, we will denote the slope simply by $\mu$. 

We say that a torsion free sheaf $E$ on $X$ is {\it stable} (resp. {\it semistable}) with respect to $\alpha$ if the slope of every coherent subsheaf of $E$ is smaller than (resp. smaller than or equal to) $\mu_\alpha(E)$.}
\end{Def}

Note that if $\alpha$ is the class of a complete intersection curve of ample divisors $H_1,\cdots,H_{n-1}$, 
these concepts coincides with the usual notions of stability and semistability with respect to the polarizations 
$H_1,\cdots,H_{n-1}$. 

The following remark is well known for the usual concept of stability, and the same proof can be applied verbatim in our setting:

\begin{RemDef}\label{rem:HNF}
{\rm Given a movable class $\alpha$ and a torsion free sheaf $E$ on a variety $X$, the set of slopes of torsion free subsheaves of $E$ has a maximum, and the saturated subsheaf 
of maximal rank 
for which the maximal slope is achieved is called the {\it maximally destabilizing subsheaf} of $E$. By recursion 
one obtains the {\it Harder-Narasimhan filtration} of $E$ with respect to $\alpha$, which is the only filtration
$$
\mbox{HN}^\alpha(E):\;0=E_0\subsetneq E_1\subsetneq\ldots\subsetneq E_r=E
$$
verifying that the quotients $E_i/E_{i-1}$ are torsion free semistable sheaves, and the sequence of slopes 
$\big(\mu_i(E):=\mu(E_i/E_{i-1})\big)_i$ is strictly decreasing.}
\end{RemDef}

If $\alpha=[C]$ we may also consider the
Harder-Narasimhan filtration of $E|_C$. Although it is not true in general that this filtration coincides with the restriction of $\mbox{HN}^\alpha(E)$, it does when $C$ is a general complete intersection curve. This follows from the
Mehta-Ramanathan restriction theorem (cf. \cite{MR}, see also \cite{F}) and as a consequence we obtain in this case that
$E_i|_C$ is ample if $\mu_i(E)>0$.

In this paper we are interested in the Harder-Narasimhan filtration of $T_X$ with respect to movable classes $\alpha$. Let us introduce the notation that we will use below.
\begin{Not}\label{not}
 {\rm Let $X$ be a smooth complex projective variety and $\alpha$ be a movable class in $X$. We will denote by $$\mbox{HN}(T_X):0=F_0\subsetneq F_1\subsetneq\ldots\subsetneq F_r=T_X$$ the Harder-Narasimhan filtration of $T_X$ with respect to $\alpha$ and by $\mu_i$ the slopes of $F_i/F_{i-1}$, for all $i$.
We will also denote by $s$ the integer $s=\max\big(\{0\}\cup\{i|\;\mu_i>0\}\big)$.}
\end{Not}

The first property we are concerned with is the behaviour of $\mbox{HN}(T_X)$ with respect to birational transformations:

\begin{Lem}\label{lem:birHNF}
Let $\alpha=[C]$ be a movable class on $X$, and let $\pi:\widetilde{X}\to X$ be projective birational morphism. Assume that $C$ does not intersect the image $B$ of the exceptional locus $E$ of $\pi$. Then the Harder-Narasimhan filtrations of $T_X$ with respect to $\alpha$ and of $T_{\widetilde{X}}$ with respect to the pull-back class $\widetilde{\alpha}$ of $\alpha$ coincide over $X\setminus B$. Thus the Harder-Narasimhan filtration of $T_X$ with respect to weakly free classes is birationally invariant.
\end{Lem}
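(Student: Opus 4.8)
The plan is to prove the local statement first, namely that over the common open set $U=X\setminus B$ the two filtrations are literally the same chain of subsheaves, and then to invoke the uniqueness of the Harder-Narasimhan filtration (Remark and Definition \ref{rem:HNF}) to promote this to the asserted coincidence. First I would record two geometric facts. Since $X$ is smooth, the valuative criterion applied to the DVR $\mathcal{O}_{X,x}$ at a codimension-one point $x$ shows that $\pi$ is an isomorphism over every point of codimension at most one; hence $B=\pi(E)$ has codimension $\ge 2$ in $X$. Consequently $\pi$ restricts to an isomorphism $\widetilde{U}:=\widetilde{X}\setminus E \xrightarrow{\sim} U$, which identifies $T_{\widetilde{X}}|_{\widetilde{U}}$ with $T_X|_U$. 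The curve $C$ lies in $U$, its lift $\widetilde{C}\subset\widetilde{U}$ satisfies $\pi_*[\widetilde{C}]=[C]=\alpha$ and $\widetilde{C}\cap E=\emptyset$, and $\widetilde{\alpha}=[\widetilde{C}]$.

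The technical heart is a slope-invariance statement: the $\alpha$-slope of a torsion free sheaf (in the sense of Definition \ref{def:slope}) depends only on its restriction to $U$, and it matches the $\widetilde{\alpha}$-slope of any sheaf on $\widetilde{X}$ agreeing with it over $\widetilde{U}$. Indeed $c_1(E)\cdot\alpha=\deg(\det E|_C)$ is computed on $C\subset U$, so it is unaffected by modifications of $E$ away from $U$; on the $\widetilde{X}$ side, two coherent subsheaves of a fixed torsion free sheaf that agree over $\widetilde{U}$ have first Chern classes differing by a cycle supported on $E$, which meets $\widetilde{\alpha}=[\widetilde{C}]$ in zero because $\widetilde{C}\cap E=\emptyset$. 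Together with the identification $T_{\widetilde{X}}|_{\widetilde{U}}\cong T_X|_U$ this yields $\mu_\alpha=\mu_{\widetilde{\alpha}}$ for corresponding sheaves.

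Next I would transport the filtration from $\widetilde{X}$ to $X$, which is the favourable direction since $B$ is of codimension $\ge 2$ while $E$ need not be. Writing $\widetilde{F}_\bullet=\mbox{HN}^{\widetilde{\alpha}}(T_{\widetilde{X}})$, each $\widetilde{F}_i$ is saturated, so $\widetilde{F}_i|_{\widetilde{U}}$ is a saturated subsheaf of $T_X|_U$; because $B$ has codimension $\ge 2$ and a saturated subsheaf of the reflexive sheaf $T_X$ is reflexive, this extends uniquely to a saturated subsheaf $G_i\subset T_X$ with $G_i|_U=\widetilde{F}_i|_{\widetilde{U}}$ and torsion free quotients $G_i/G_{i-1}$. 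The matching slopes show the sequence $\mu_\alpha(G_i/G_{i-1})$ is strictly decreasing, so it remains to prove each $G_i/G_{i-1}$ is $\alpha$-semistable. Given a subsheaf $S\subset G_i/G_{i-1}$, I extend $S|_U$ to a coherent subsheaf $\widetilde{S}\subset \widetilde{F}_i/\widetilde{F}_{i-1}$ on $\widetilde{X}$; semistability of $\widetilde{F}_i/\widetilde{F}_{i-1}$ bounds $\mu_{\widetilde{\alpha}}(\widetilde{S})$ by $\mu_{\widetilde{\alpha}}(\widetilde{F}_i/\widetilde{F}_{i-1})$, and the slope-invariance statement converts this into $\mu_\alpha(S)\le\mu_\alpha(G_i/G_{i-1})$. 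By the uniqueness in Remark and Definition \ref{rem:HNF}, $G_\bullet=\mbox{HN}^\alpha(T_X)$, so the two filtrations coincide over $U=X\setminus B$. For the final assertion, if $\alpha$ is weakly free (Definition \ref{def:weakfree}) then, $B$ being of codimension $\ge 2$, the class $\alpha$ admits a representative $C$ disjoint from $B$, and the above applies verbatim to this $C$.

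The main obstacle I anticipate is the semistability transfer: one must ensure that pushing a test subsheaf across the exceptional locus, where $\pi$ is genuinely nontrivial and $E$ may be a divisor, neither changes the relevant slopes nor destroys saturation. This is precisely what the slope-invariance statement is engineered to control, the key point being that the movable curve representing $\alpha$ is confined to the locus where $\pi$ is an isomorphism, so it never detects the exceptional behaviour.
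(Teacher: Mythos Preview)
Your proof is correct and follows essentially the same strategy as the paper: both arguments rest on the observation that, since the chosen curve avoids the exceptional locus, slopes with respect to $\alpha$ and $\widetilde{\alpha}$ depend only on the restriction to the common open set $U\cong\widetilde{U}$, so one can transport a Harder-Narasimhan filtration across $\pi$ and verify semistability of the quotients by transporting test subsheaves back. The only difference is directional: the paper starts from $\mbox{HN}^\alpha(T_X)$ and extends each term (by zero, then saturation) to a subsheaf of $T_{\widetilde{X}}$, whereas you start from $\mbox{HN}^{\widetilde{\alpha}}(T_{\widetilde{X}})$ and extend over the codimension-$\ge 2$ set $B$ using reflexivity. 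Either direction works for the same reason, and your added remarks (that $\mathrm{codim}\,B\ge 2$ because $X$ is smooth, and the explicit slope-invariance statement) make transparent a couple of points the paper leaves implicit.
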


\begin{proof}
 Let $0=F_0\subset F_1\subset\ldots\subset F_r=T_X$ denote the Harder-Narasimhan filtration of $T_X$ with respect to 
$\alpha$. For every $i$ let us denote by $\widetilde{F}_i$ the saturation in $T_{\widetilde{X}}$ of the extension by zero of 
$F_i|_{U}$, where $U$ denotes the open set $X\setminus B\cong \widetilde{X}\setminus E$. 

By construction $\mu_{\widetilde{\alpha}}(\widetilde{F}_i/\widetilde{F}_{i-1})=\mu_{\alpha}(F_i/F_{i-1})$, hence it suffices to show that $\widetilde{F}_i/\widetilde{F}_{i-1}$ is semistable with respect to $\widetilde{\alpha}$ for all $i$. In fact, given any destabilizing subsheaf $D\subset \widetilde{F}_i/\widetilde{F}_{i-1}$, the saturation in $T_X$ of the extension by zero of $D|_U$ would destabilize $F_i/F_{i-1}$, a contradiction.
\end{proof}

Using the fact that the tensor product of semistable sheaves, modulo torsion, is again semistable (\cite[Appendix]{CP}), one 
can easily prove that the $F_i$'s are integrable provided $i\leq s$. If $s>0$ and $\alpha$ is the class of a general complete intersection curve $C$, then $F_s|_C$ is ample and \cite{KST} allows us to say that the 
leaves of $F_s$ are algebraic and rationally connected. Unfortunately, since the ampleness of $F_s|_C$ is not known for a general movable curve $C$, the previous argument cannot be used in general.
Nevertheless the following property is still fulfilled:

\begin{Lem}\label{lem:contamp}
With the notation in \ref{not}, let $G$ be a subsheaf of $T_X$ and assume further that $\alpha=[C]$, where $C$ is a curve contained in the regular locus of the distributions $G$ and $F_s$ (that is, the locus where $T_X/G$ and $T_X/F_s$ are locally free). If moreover $G|_C$ is ample, then $G$ is contained in $F_s$.
\end{Lem}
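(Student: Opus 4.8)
The plan is to establish the inclusion $G\subseteq F_s$ by showing that the composite homomorphism
$$\psi:\ G\hookrightarrow T_X\twoheadrightarrow T_X/F_s$$
is zero. I would bound the slope $\mu_\alpha$ of its image $N:=\psi(G)\subseteq T_X/F_s$ from above and from below and derive a contradiction from the assumption $\psi\neq 0$. A guiding principle is that both estimates must be carried out for the numerical slope $\mu_\alpha$ on $X$: one may not restrict $\mbox{HN}(T_X)$ to $C$ and argue on the curve, precisely because for a merely movable curve the Mehta--Ramanathan compatibility available in the complete intersection case fails.

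For the upper bound, I would note that $\mbox{HN}(T_X)$ descends to a filtration of $T_X/F_s$ whose terms are the images $F_i/F_s$ for $i\geq s$; its graded pieces are the semistable sheaves $F_i/F_{i-1}$ with $i>s$ and its slopes $\mu_{s+1}>\cdots>\mu_r$ are strictly decreasing, so by uniqueness it is $\mbox{HN}^\alpha(T_X/F_s)$. Hence the maximal slope of a subsheaf of $T_X/F_s$ equals $\mu_{s+1}$, and $\mu_{s+1}\leq 0$ by the definition of $s$ (if $s=r$ then $T_X/F_s=0$ and there is nothing to prove). In particular $\mu_\alpha(N)\leq\mu_{s+1}\leq 0$.

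For the lower bound I would use the hypotheses on $C$. Since $C$ lies in the regular locus of $G$, the sheaf $G$ is locally free near $C$ and $G|_C$ is a genuine ample vector bundle. Supposing $\psi\neq 0$, the image $N$ is a nonzero subsheaf of the torsion free sheaf $T_X/F_s$, hence torsion free of some rank $m\geq 1$, and $G$ surjects onto $N$. Passing to $m$-th exterior powers yields a surjection $\Lambda^m G\twoheadrightarrow\det N$, and restricting to $C$ a surjection $\Lambda^m(G|_C)\twoheadrightarrow(\det N)|_C$. As exterior powers of an ample bundle on a smooth curve are again ample, $\Lambda^m(G|_C)$ is ample, so its rank-one quotient $(\det N)|_C$ has positive degree; this should translate into $\mu_\alpha(N)=\big(c_1(N)\cdot\alpha\big)/m>0$, contradicting the upper bound and forcing $\psi=0$.

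The main obstacle is the final implication of the lower bound: the passage from ``$(\det N)|_C$ has positive degree'' to ``$c_1(N)\cdot\alpha>0$''. For a coherent sheaf these two quantities differ by the alternating contribution of the sheaves $\operatorname{Tor}_i(\det N,\mathcal O_C)$, which may be nonzero when $C$ meets the codimension-two locus where $N$ fails to be locally free, and whose signs do not obviously cooperate. I expect to resolve this by exploiting that $\alpha=[C]$ with $C$ lying in the regular locus of $G$ and $F_s$ (and, in the situations where the lemma is applied, represented by a weakly free curve), which keeps $C$ away from the relevant codimension-two loci so that the restriction to $C$ computes the intersection number $c_1(N)\cdot\alpha$ without correction.
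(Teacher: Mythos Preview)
Your approach is exactly the paper's: show that $N=(G+F_s)/F_s\cong G/(G\cap F_s)$ is a nonzero subsheaf of $T_X/F_s$ with $\mu_\alpha(N)>0$, contradicting $\mu_{s+1}\le 0$. The paper's proof is a two-line sketch that simply asserts $\mu_\alpha(N)>0$ ``by the ampleness of $G|_C$''; your version supplies the details via exterior powers.

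Your diagnosis of the ``main obstacle'' is slightly off, and your proposed resolution is neither necessary nor available under the stated hypotheses. First, since $X$ is smooth and $N$ is torsion free, $\det N:=(\Lambda^m N)^{**}$ is an honest line bundle on $X$, so $\deg_C\big((\det N)|_C\big)=c_1(N)\cdot[C]$ on the nose, with no $\operatorname{Tor}$ corrections; the worry you raise does not arise here. The genuine subtlety is one step earlier: the map $\Lambda^m G\to\det N$ is \emph{not} surjective in general. What one has is a surjection $\Lambda^m G\twoheadrightarrow \det N\otimes\mathcal I_Z$ for some closed subscheme $Z$ of codimension $\ge 2$ (this is the same device the paper uses later in the proof of Theorem~\ref{th:main}). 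Restricting to $C$ and killing torsion yields a line bundle quotient $L$ of the ample bundle $\Lambda^m(G|_C)$, with
\[
\deg L \;=\; c_1(N)\cdot[C]\;-\;\mathrm{length}(Z\cap C)\;\le\; c_1(N)\cdot[C].
\]
Hence $c_1(N)\cdot[C]\ge\deg L>0$: the correction term has the \emph{right} sign, and $\mu_\alpha(N)>0$ follows without moving $C$. Note also that your fallback (choose a weakly free representative avoiding the bad locus of $N$) is not guaranteed by the hypotheses of the lemma, and even when $[C]$ is weakly free, replacing $C$ by another representative may destroy the ampleness of $G|_C$, which is not a purely numerical condition.
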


\begin{proof}
Assume the contrary. By the ampleness of $G|_C$ we get $\mu(G+F_s/F_s)=\mu(G/G\cap F_s)>0$, therefore $\mu_{s+1}>0$, 
contradicting the choice of $s$.
\end{proof}

The following result will be used in the case of the MRCF.

\begin{Lem}\label{lem:notcontamp}
Let $X$ and $Z$ be smooth projective varieties of positive dimension, $\pi:X\dashrightarrow Z$ a rational map, $G$ the foliation on $X$ associated to $\pi$ and $\alpha $ a weakly free class on $X$. If  $G\subsetneq F^{\alpha}_s$, then $Z$ is uniruled. 
\end{Lem}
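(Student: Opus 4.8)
The plan is to descend the positive part of the Harder--Narasimhan filtration along $\pi$ to a positive-slope subsheaf of $T_Z$, and then to invoke Miyaoka's generic semi-positivity theorem on $Z$. First I would reduce to the case where $\pi$ is a morphism: choosing a resolution $\sigma\colon\widetilde{X}\to X$ with $\widetilde{\pi}=\pi\circ\sigma$ a morphism, and using that $\alpha$ is weakly free, I pick a representative $C$ of $\alpha$ avoiding the (codimension $\geq 2$) image $B$ of the exceptional locus as well as the singular loci of the distributions $G$ and $F_s$. By Lemma~\ref{lem:birHNF} the filtration $\mbox{HN}(T_X)$ transfers to $\widetilde{X}$ over $X\setminus B$ and $C$ lifts isomorphically, so one may assume $\pi$ is a morphism and that $C$ meets neither the indeterminacy nor the singular loci of the two foliations.

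Next I would extract the positive quotient. Since $G\subsetneq F_s$, the sheaf $Q:=F_s/G$ is nonzero, and it is torsion free because it injects into $T_X/G$ (here $G$ is saturated, being a foliation). As $Q$ is a torsion-free quotient of $F_s$, its slope is bounded below by the minimal Harder--Narasimhan slope of $F_s$, so $\mu_\alpha(Q)\geq\mu_s>0$; in particular $c_1(Q)\cdot\alpha=\mu_\alpha(Q)\cdot\rank(Q)>0$.

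The key geometric step is to recognise $Q$ as a pullback from $Z$. On the open set $U\subseteq X$ where $\pi$ is a smooth morphism one has $G|_U=\ker(d\pi)|_U$ and $(T_X/G)|_U\cong\pi^*T_Z|_U$, so $Q|_U$ is a subbundle of $\pi^*T_Z|_U$. Because $F_s$ is integrable (it is a term $F_i$ with $i\leq s$) and contains the relative tangent sheaf $G$, one has $[G,F_s]\subseteq F_s$, i.e. $F_s$ is invariant under the fibrewise flows of $\pi$; hence $Q|_U$ is constant along the fibres and descends to a saturated subsheaf $\mathcal{G}_Z\subseteq T_Z$ with $Q|_U=\pi^*\mathcal{G}_Z|_U$. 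This is the step I expect to be the main obstacle: one must make the projectability argument precise and, more delicately, control the discrepancy between $Q$ and $\pi^*\mathcal{G}_Z$ along the critical divisor of $\pi$, since this divisor is only of codimension $1$ and so its contribution can a priori affect the intersection number with the movable class $\alpha$. The goal of this bookkeeping is to establish the degree identity $c_1(\mathcal{G}_Z)\cdot\beta=c_1(Q)\cdot\alpha$ (or at least the inequality $c_1(\mathcal{G}_Z)\cdot\beta\geq c_1(Q)\cdot\alpha$), with $\beta:=\pi_*\alpha$.

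Finally I would compare degrees and conclude. Since $\alpha$ is movable and $\pi$ is dominant with $\dim Z>0$, the class $\beta=\pi_*\alpha$ is a nonzero movable class on $Z$: the images of a covering family of curves again cover $Z$. By the projection formula, $c_1(\mathcal{G}_Z)\cdot\beta=c_1(\pi^*\mathcal{G}_Z)\cdot\alpha=c_1(Q)\cdot\alpha>0$, so $\mathcal{G}_Z\subseteq T_Z$ is a subsheaf of strictly positive slope with respect to the movable class $\beta$; equivalently, $\Omega^1_Z$ fails to be generically nef along $\beta$. By Miyaoka's generic semi-positivity theorem in the form valid for movable classes (cf. \cite{Mi}, \cite{CP}, \cite{BDPP}), this can only occur when $K_Z$ is not pseudoeffective, that is, when $Z$ is uniruled, which is the desired conclusion.
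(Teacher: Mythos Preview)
Your outline matches the paper's proof in all its essential moves: reduce to a morphism via Lemma~\ref{lem:birHNF}, observe that $F_s/G$ has positive $\alpha$-degree, descend $F_s$ (equivalently $F_s/G$) along $\pi$ to a subsheaf of $T_Z$, compare degrees via the projection formula with $\beta=\pi_*\alpha$, and conclude by the Miyaoka/Campana--Peternell criterion (the paper cites \cite[Thm.~1.9]{CP}).

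The one place where you stop short is exactly the one you flag, and the paper does supply a device there that you are missing. The concern is not really the critical divisor of $\pi$, but rather that the bad loci one must avoid in order to have $F_s/G$ and $\pi^*\mathcal G_Z$ locally free and comparable near $C$ include $\pi^{-1}\big(\Sing(\mathcal G_Z)\big)$; without further control this pullback can have codimension~$1$ in $X$, and then weak freeness of $\alpha$ does not let you dodge it. The paper fixes this by performing a second birational reduction \emph{before} the descent: it replaces $\pi$ by an equidimensional model (closing up the family in the Chow scheme, normalizing the base change to a desingularization of the parameter space). After this flattening, the preimage of any codimension~$\geq 2$ subset of $Z$ again has codimension~$\geq 2$ in $X$, so a weakly free representative $C$ of $\alpha$ can be chosen disjoint from $\Sing X$, from $\Sing(F_s)$, $\Sing(G)$, and from $\pi^{-1}\big(\Sing(\mathcal G_Z)\big)$. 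Near such a $C$ both $F_s/G$ and $\pi^*\mathcal G_Z$ are locally free of the same rank with an injection $F_s/G\hookrightarrow \pi^*\mathcal G_Z$, whence $\deg_\alpha(\pi^*\mathcal G_Z)\geq \deg_\alpha(F_s/G)>0$, and the projection formula gives $c_1(\mathcal G_Z)\cdot\pi_*\alpha>0$. With this equidimensional reduction inserted, your argument is complete and coincides with the paper's.
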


\begin{proof}
Since by Lemma \ref{lem:birHNF} the Harder-Narasimhan filtration of $T_X$ with respect to weakly free classes is birationally invariant, we may assume that $\pi$ is a morphism. The same property will allow us to reduce ourselves  to the situation when $\pi$ is also equidimensional. Indeed, over some open subset $V$ of $Z$ the morphism $\pi$ coincides with the universal family over some locally closed subvariety $U$ of the corresponding Chow scheme of $X$. Let $Z_1$ be the closure of $U$ and $X_1$ be the universal family over $Z_1$. Let further $Z_2\to Z_1$ denote a desingularization of $Z_1$ and $X_2$ the normalization of $X_1\times_{Z_1}Z_2$. We get the following diagram
   $$
   \xymatrix{X_2\ar[d]^{\pi_2}\ar@/^1pc/[rrr]^{\mbox{\tiny 
bir}}\ar[r]&X_1\ar[d]\ar@{-->}[r]&U\ar[d]\ar@{^{(}->}[r]&X\ar[d]^{\pi}\\
   Z_2\ar[r]&Z_1\ar@{-->}[r]&V\ar@{^{(}->}[r]&Z}
   $$
where the composition of the rational maps in the upper row is a birational morphism. The pullback of $\alpha$ by this morphism is a weakly free class on $X_2$. 

We now replace  $\pi:X\to Z$ by $\pi_2:X_2\to Z_2$ which is equidimensional and change notation accordingly. The fact that $X$ could be singular will not bother us since  $\alpha$ is   weakly free. 
   
As before the leaves of the foliation $F_s$ contain the leaves of $G$. 

We claim that the foliation $F_s$ descends to a foliation $E$ on $Z$. 
Indeed, consider a small analytic neighborhood $W$ around a general point $x$ of a fixed general fiber of 
 $\pi$.  
The foliation defined by $F_s$ is regular around $x$ and is a product in $W$.
Each leaf $L$ of $F_s$ in $W$ projects onto a smooth closed submanifold $S$ of $\pi(U)$ with $\dim S=\rank
(F_s/G)$. Therefore the closure of the leaf of $F_s$ in $\pi^{-1}(\pi(W))$ containing $L$ is $\pi^{-1}(S)$. Since
  $\pi^{-1}(\pi(W))$ is a neighbourhood of $\pi^{-1}(\pi(x))$ we see that $S$ is independent of the choice of $x
  \in\pi^{-1}(\pi(x))$. This defines the desired foliation on $Z$. 
  
  Choose now a movable curve $C$ representing $\alpha $ and avoiding  $\Sing X$, 
  the singular sets of the foliations $F_s$ and $G$ as well as the pull-back of the singular set of the foliation $E$. 
  Then in a neighbourhood of $C$ it is easy to check that $F/G$ and $\pi^*E$ are locally free 
  and that there exists an injective morphism $F/G\to \pi^*E$. 
  This implies $\deg_{\alpha}(\pi^*E)\geq \deg_{\alpha}(F_s/G)>0.$
Thus $E$ has positive degree with respect to the movable curve
$C'=\pi(C)$ and by 
  \cite[Thm.~1.9]{CP} it follows that
 $Z$ is uniruled. 
\end{proof}

\section{Movable curves and rationally connected foliations}

In this section we prove Theorem \ref{th:main} and make some comments on the two dimensional case.

\subsection{The rationally connected case}\label{ssec:ratcon}

For rationally connected varieties the statement of the main theorem takes the following form.
\begin{Prop}\label{baby} Let $X$ be a rationally connected manifold. There exists a class $\alpha\in N_{\mbox{\rm\tiny amp}}$ represented by a movable curve $C$ verifying that $T_X|_C$ is ample.
\end{Prop}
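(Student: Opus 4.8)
The plan is to construct $C$ as a smooth very free rational curve whose numerical class is a sum of classes of very free curves, arranged so that the sum falls into the open cone $N_{\mbox{\tiny amp}}$. The two things to control simultaneously are the \emph{location of the class} (it must lie in the interior of $\overline{\mbox{ME}(X)}$) and the \emph{ampleness} of $T_X|_C$; a single very free curve gives the second immediately but not the first, so the work is in spreading out the class.

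First I would recall that, $X$ being rationally connected, there exist very free rational curves, i.e. morphisms $f\colon\P^1\to X$ with $f^*T_X$ ample (Koll\'ar--Miyaoka--Mori); for the image $C$ one then has $T_X|_C=f^*T_X$ ample, and $C$ moves in a family dominating $X$, so $[C]$ is a movable class. Next I would translate interiority into an intersection condition: by the Boucksom--Demailly--P\u{a}un--Peternell duality recalled after Definition \ref{def:mov}, $\overline{\mbox{ME}(X)}$ is the dual of the pseudoeffective cone, so a movable class $\alpha$ lies in $N_{\mbox{\tiny amp}}$ exactly when $\alpha\cdot D>0$ for every nonzero pseudoeffective class $D$. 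Since any very free curve is movable, $[C]\cdot D\ge 0$ for every pseudoeffective $D$; and because the classes of very free rational curves span $N_1(X)$, no nonzero $D$ can annihilate all of them, whence for each nonzero pseudoeffective $D$ there is a very free curve $C$ with $[C]\cdot D>0$.

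With this in hand I would run a compactness argument: intersecting the pseudoeffective cone with a hyperplane yields a compact set of directions, and by openness of each condition $[C]\cdot D>0$ one extracts finitely many very free curves $C_1,\dots,C_m$ such that, for every nonzero pseudoeffective $D$, at least one $[C_i]\cdot D>0$. Then $\big(\sum_i[C_i]\big)\cdot D>0$ for all such $D$, so $\sum_i[C_i]\in N_{\mbox{\tiny amp}}$. Finally I would realize this sum by a single curve: assemble $C_1,\dots,C_m$ into a connected nodal comb (joining them through general points, using rational connectedness to add very free connecting teeth if needed) and smooth it by the standard deformation technique for combs of free curves. The resulting $C$ is a smooth very free rational curve with $[C]=\sum_i[C_i]\in N_{\mbox{\tiny amp}}$, it is movable since it is free, and $T_X|_C$ is ample because a vector bundle on a nodal curve is ample iff its restriction to each component is, and ampleness is open in families, so the ampleness of $T_X$ on each tooth propagates to the smoothing.

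The main obstacle is the gluing-and-smoothing step, where one must keep both invariants under control at once: the comb has to be genuinely smoothable (which forces the teeth to be sufficiently free and attached at general points), the numerical class of the smoothing must be exactly $\sum_i[C_i]$, and $T_X$ must be ample on \emph{every} component of the comb so that openness of ampleness transfers it to $C$. A secondary point to nail down carefully is the spanning statement for very free curve classes, on which the compactness argument for interiority rests.
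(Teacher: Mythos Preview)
Your approach is correct and genuinely different from the paper's. The paper introduces the notion of a \emph{good family} of curves (Definition~\ref{def:good}) and proves, via an analytic argument with positive closed $(1,1)$-currents, that the class of a member of a good family lies in $N_{\mbox{\tiny amp}}$ (Corollary~\ref{cor:good}); the proof of the Proposition then consists of building a good family of very free curves by noetherian recursion over $X$, taking unions of curves from finitely many very free families $\cM_1,\cM_2,\dots$ so that the goodness conditions are met outside codimension two. In particular the paper never smooths and is content with a possibly reducible representative $C$. Your route stays inside the finite-dimensional space $N_1(X)$: you use BDPP duality to convert interiority into strict positivity on the compact base of the pseudoeffective cone, cover by finitely many half-spaces coming from very free curves, and then realize the sum by smoothing a tree. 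This is more elementary (no currents) and yields an irreducible very free $C$, but it does not extend as readily to the non--rationally connected situation of Theorem~\ref{th:main}, where the paper reuses the good-family machinery with a complete intersection handle.

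Two points in your argument deserve to be written out rather than asserted. First, the spanning statement: the cleanest way is to fix one very free class $[C_0]$ and show that for every irreducible curve $D$ the class $[D]+m[C_0]$ is represented by a very free curve for $m\gg 0$ (attach $m$ very free teeth to $D$ and smooth, \cite[II.7.9--7.10]{Kol}); since irreducible curves span $N_1(X)$, so do very free ones. Second, the smoothing step: if you add connecting teeth the resulting class is $\sum_i[C_i]+\gamma$ with $\gamma$ movable, not $\sum_i[C_i]$ on the nose; this still lies in $N_{\mbox{\tiny amp}}$ since $(\sum_i[C_i])\cdot D>0$ and $\gamma\cdot D\ge 0$ for every nonzero pseudoeffective $D$, but your equality $[C]=\sum_i[C_i]$ should be relaxed accordingly. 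The passage ``ample on each component $\Rightarrow$ ample on the nodal tree $\Rightarrow$ ample on the smoothing'' is fine for trees of $\P^1$'s, but you can also sidestep it entirely by invoking directly that a smoothing of a tree of very free rational curves is very free (\cite[II.7.6, IV.3.9.4]{Kol}).
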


In order to prove this result we need some preparations.
\begin{Def}
{\rm Let $T$ be a positive closed current of bidegree $(1,1)$ on $X$ and $f:C\to X$ a non-constant morphism from a smooth connected curve $C$ to $X$. Choose local $i\partial\overline{\partial}$-potentials $u_i$ for $T$ on open sets $U_i\subset X$ and suppose that $f(C)\cap U_i$ is not contained in the polar set of $u_i$ for some $i$. Then define locally $f^*T$ as
$i\partial\overline{\partial}(u_i\circ f)$.}
\end{Def}
\begin{Lem}
   Under the above conditions $f^*T$ is a well defined positive closed current of bidegree $(1,1)$ on $C$ and
   $[f^*T]=f^*[T]$ where $[ \ ]$ denotes taking cohomology classes.
\end{Lem}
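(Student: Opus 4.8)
The plan is to treat this as a statement in pluripotential theory, resting on two standard facts: the composition of a plurisubharmonic function with a holomorphic map is again plurisubharmonic (or identically $-\infty$), and two local $i\partial\overline{\partial}$-potentials of the same closed current differ by a pluriharmonic, hence smooth, function. The verification splits into (i) checking that the local recipe $f^*T=i\partial\overline{\partial}(u_i\circ f)$ glues to a globally defined current on $C$, (ii) observing that positivity, closedness and bidegree are then automatic, and (iii) establishing the cohomological identity, which is the only substantive point.

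For (i), since $T\geq 0$ each $u_i$ is plurisubharmonic on $U_i$, so $u_i\circ f$ is plurisubharmonic on $f^{-1}(U_i)$ as soon as it is not identically $-\infty$. On an overlap $U_i\cap U_j$ the difference $u_i-u_j$ satisfies $i\partial\overline{\partial}(u_i-u_j)=0$, so it is pluriharmonic and smooth; hence $(u_i-u_j)\circ f$ is harmonic on $f^{-1}(U_i\cap U_j)$ and is annihilated by $i\partial\overline{\partial}$, giving $i\partial\overline{\partial}(u_i\circ f)=i\partial\overline{\partial}(u_j\circ f)$ there. To rule out degeneration to $-\infty$, I would consider the set $A\subset C$ of points near which some $u_i\circ f$ lies in $L^1_{\mathrm{loc}}$. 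By the smoothness of the differences on overlaps, $A$ is open; by the identity principle for plurisubharmonic functions (one that is $-\infty$ on a set of positive measure of a connected domain is identically $-\infty$), $A$ is also closed. Since $C$ is connected and $A\neq\emptyset$ precisely by the hypothesis that $f(C)\cap U_i$ is not contained in the polar set of $u_i$ for some $i$, we get $A=C$. Thus $f^*T$ is well defined, positive because each $u_i\circ f$ is plurisubharmonic, of bidegree $(1,1)$, and automatically closed since $(1,1)$ is top degree on the curve $C$.

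For (iii), I would compare $T$ with a smooth representative of its class. As $X$ is projective, hence Kähler, the global $\partial\overline{\partial}$-lemma for currents writes $T=\theta+i\partial\overline{\partial}\psi$ with $\theta$ a smooth closed $(1,1)$-form representing $[T]$ and $\psi$ a global quasi-plurisubharmonic function in $L^1(X)$; locally $u_i=v_i+\psi$ with $v_i$ a smooth potential of $\theta$. Pulling back yields $f^*T=f^*\theta+i\partial\overline{\partial}(\psi\circ f)$, where $f^*\theta$ is the ordinary smooth pullback. The polar hypothesis again forces $\psi\circ f\not\equiv-\infty$, so $\psi\circ f$ is quasi-plurisubharmonic on the compact curve $C$ and therefore lies in $L^1(C)$; consequently $i\partial\overline{\partial}(\psi\circ f)$ is a globally $\partial\overline{\partial}$-exact current of total mass zero by Stokes, with vanishing cohomology class. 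Hence $[f^*T]=[f^*\theta]=f^*[\theta]=f^*[T]$ by functoriality of the de Rham pullback on the smooth form $\theta$.

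The gluing, positivity and closedness are routine once the pluriharmonicity of potential differences is in hand. The two points requiring genuine care are the propagation argument guaranteeing that the pulled-back potentials remain locally integrable on all of $C$ rather than collapsing to $-\infty$, and the legitimacy of the cohomology computation; I expect the latter, namely the reduction to a smooth form plus a global potential and the vanishing of the exact remainder, to be the conceptual heart of the proof.
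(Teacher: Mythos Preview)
Your proof is correct and follows essentially the same approach as the paper: both reduce to a smooth representative $\theta$ (the paper's $\eta$) via the global $i\partial\overline{\partial}$-lemma, writing $T=\theta+i\partial\overline{\partial}\psi$ and concluding $[f^*T]=[f^*\theta]=f^*[T]$. You are simply more explicit than the paper about the gluing via pluriharmonic differences and about the connectedness/propagation argument ensuring the pulled-back potentials stay locally integrable on all of $C$.
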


\begin{proof}
There exists a smooth closed $(1,1)$-form $\eta$ in the cohomology class of $T$. Therefore $T-\eta=i\partial\overline{\partial}v$. We may assume $U_i$ to be biholomorphic to open balls, hence $\eta|_{U_i}=i\partial\overline{\partial}v_i$ for some smooth functions $v_i$. Thus $f^*T$ is given locally as
$i\partial\overline{\partial}((v_i+v|_{U_i})\circ f)$ showing that $[f^*T]=[f^*\eta]=f^*[T]$.
\end{proof}
\begin{Def}\label{def:good}
{\rm We shall say that a (connected) family of curves $(C_s)_{s\in S}$ on $X$ is {\em good} if there exists some closed subset
$A\subset X$ of codimension two such that for all $x\in
 X\setminus A$ there exist $C_{s_1},...,C_{s_n}$ curves in the family passing through $x$ and analytically open
connected local branches $C_{s_i}^0$ of $C_{s_i}$ at $x$
 with the following properties
\begin{enumerate}
\item  $C_{s_i}^0$ is smooth at $x$ for all $i$,
\item $\sum_{i=1}^n T_xC_{s_i}^0=T_xX$,
\item for all $i$ there are $n-1$-dimensional subfamilies in $S$ of curves whose connected local branches give foliations locally around $x$ which contain  $C_{s_i}^0$ as a leaf.
\end{enumerate}}
\end{Def}

\begin{Lem}
If $(C_s)_{s\in S}$ is a good family of curves on $X$ and $T$ is a positive closed current of bidegree $(1,1)$ such that the intersection product
$[T][C_s]$ vanishes, then $T=0$.
\end{Lem}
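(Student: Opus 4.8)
The plan is to pull $T$ back to the curves of the family, use positivity to see that these pullbacks all vanish, and then propagate the vanishing through the foliations furnished by the goodness hypothesis until $T$ itself is forced to be zero.

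First I would fix a member $C_s$ of the family, let $f_s\colon\widetilde{C}_s\to X$ be its normalization, and form $f_s^*T$ as in the preceding lemma. Since $T\ge 0$, its pullback is a positive measure on $\widetilde{C}_s$ whose total mass is the cohomological degree $[f_s^*T]=f_s^*[T]=[T]\cdot[C_s]$, which vanishes by hypothesis. A positive measure of total mass zero is zero, so $f_s^*T=0$; equivalently, every local potential of $T$ is harmonic along each branch of every curve of the family. (We only need members not contained in the polar set of the potential, which discards at most a pluripolar, hence negligible, subfamily of leaves.) Next I would reduce to the complement of the codimension-two set $A$ of Definition \ref{def:good}: a positive closed $(1,1)$-current is normal, hence flat, and has dimension $2n-2$, while $\dim_{\mathbb{R}}A\le 2n-4$, so by Federer's support theorem any such current supported on $A$ vanishes. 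It therefore suffices to prove $T=0$ near an arbitrary $x\in X\setminus A$, where I write $T=i\partial\overline{\partial}u$ for a local plurisubharmonic potential $u$.

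Now the goodness hypothesis enters. By conditions (1)--(3) there are $n$ one-dimensional foliations $\mathcal{F}_1,\dots,\mathcal{F}_n$ defined near $x$, whose leaves are branches of curves of the family and whose tangent directions $v_1(x),\dots,v_n(x)$ span $T_xX$; by continuity $v_1,\dots,v_n$ form a holomorphic frame in a neighbourhood of $x$. By the previous step $u$ is harmonic along every leaf of each $\mathcal{F}_i$. Choosing coordinates $w$ adapted to $\mathcal{F}_i$, so that its leaves are the $w_1$-curves, leaf-harmonicity says that the slices of the coefficient measure $\partial^2 u/\partial w_1\partial\overline{w}_1$ along the leaves all vanish; disintegrating over the space of leaves gives $\partial^2 u/\partial w_1\partial\overline{w}_1=0$. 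Since the complex Hessian of $u$ is a positive-semidefinite Hermitian matrix of measures, the Cauchy--Schwarz inequality for its entries forces the whole $w_1$-row and column to vanish, i.e. $\iota_{v_i}T=0$. As this holds for each $i$ and $v_1,\dots,v_n$ is a frame, $T$ has vanishing contraction with every tangent vector, so $T=0$ near $x$. Since $x\in X\setminus A$ was arbitrary, $T$ is supported on $A$ and we conclude $T=0$ by the support theorem.

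The step I expect to be the main obstacle is the current-level argument of the third paragraph: passing rigorously from \emph{harmonicity along every leaf} to the vanishing of the coefficient measure $\partial^2 u/\partial w_1\partial\overline{w}_1$ (which requires the slicing and disintegration theory for positive measures, together with the fact that the relevant full-measure family of leaves really arises as branches of curves in the good family), and then exploiting the positivity of $T$ to upgrade a single vanishing diagonal entry to the vanishing of an entire row. This is precisely the point at which positivity of $T$ is indispensable, and where the three conditions defining a good family must be used in combination rather than separately.
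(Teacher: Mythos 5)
Your proof is correct and follows essentially the same route as the paper's: pull $T$ back to the (normalized) curves to see that it vanishes on each leaf, disintegrate over the leaf spaces of the local foliations supplied by condition (3), and combine the spanning condition (2) with positivity to force $T$ to vanish off the codimension-two set $A$, which is then handled by a support theorem. The only cosmetic difference is the endgame: the paper sums the pulled-back transverse volume forms $\omega_i$ into a strictly positive $(n-1,n-1)$-form $\Omega^{n-1}$ and concludes via the vanishing of the trace measure, whereas you argue entrywise on the Hessian matrix of measures via Cauchy--Schwarz; the two are equivalent, and you correctly identify the disintegration step (the paper's $T\wedge\omega_i=0$) as the one point requiring care.
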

\begin{proof}
Take $f:C'_s\to C_s$ the morphism of normalization. Since the family is good $f^*T$ is well defined for general $s\in S$ and  we see that the hypothesis $[T][C_s]=0$ implies $f^*T=0$.
Now adopting the notations of the above definition, let $x\in
 X\setminus A$ and $\pi_i:U_i\to B_i$ be a smooth map from an open neighborhood $U_i$ of $x$
 onto a $n-1$-dimensional base $B_i$ defining the local
  foliation which contains $C_{s_i}^0$  as a leaf. Let further $\omega_i$ be the pull-back to $U_i$ of a volume form
  on $B_i$. Then our assumption on $T$ provides  $T\wedge \omega_i=0$ on $U_i$ and thus
  $T\wedge \sum_{i=1}^n\omega_i=0$ around $x$. But $\sum_{i=1}^n\omega_i$ is a strictly positive $(n-1,n-1)$-form and thus
  the $(n-1)$-st power of a $(1,1)$-positive form $\Omega$ on $\cap_{i=1}^n U_i $, cf. \cite[Appendix]{CP}.
  Since the trace measure of $T$ with respect to $\Omega$ vanishes, the support of $T$ will be contained in $A$, hence
  $T=0$.
\end{proof}

\begin{Cor}\label{cor:good}
If $(C_s)_{s\in S}$ is a good family on $X$ then $[C_s]\in N_{\mbox{\rm\tiny amp}}$.
\end{Cor}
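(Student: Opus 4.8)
The plan is to deduce $[C_s]\in N_{\mbox{\rm\tiny amp}}$ from the positivity statement of the preceding lemma by combining the Boucksom--Demailly--P\u{a}un--Peternell duality with an elementary fact of convex geometry. Recall that $N_{\mbox{\rm\tiny amp}}$ is by definition the interior of $\overline{\mbox{ME}(X)}$ and that, by that duality, $\overline{\mbox{ME}(X)}$ is the dual cone of the pseudoeffective cone $\overline{\mbox{Eff}}(X)\subset N^1(X)$. I would therefore reduce the assertion to showing that $[C_s]\cdot D>0$ for every nonzero class $D\in\overline{\mbox{Eff}}(X)$.

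The convex-geometry input I would use is the following: for a full-dimensional salient closed convex cone $K$, a class $\alpha$ lies in $\mbox{int}(K^*)$ if and only if $\langle\alpha,v\rangle>0$ for all $v\in K\setminus\{0\}$. The nontrivial implication follows because a base of $K$ is compact, so that $v\mapsto\langle\alpha,v\rangle$ attains a positive minimum there and this positivity survives a small perturbation of $\alpha$. I would apply this with $K=\overline{\mbox{Eff}}(X)$, which is full-dimensional (it contains the ample cone) and salient (if $D$ and $-D$ are both pseudoeffective then $D\equiv 0$), and whose dual is $\overline{\mbox{ME}(X)}$.

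It then remains to establish the strict positivity. First I would observe that a good family dominates $X$, so $[C_s]$ is a movable class and hence $[C_s]\cdot D\geq 0$ for all $D\in\overline{\mbox{Eff}}(X)$. Next, given a nonzero pseudoeffective class $D$, I would represent it, using the description of $\overline{\mbox{Eff}}(X)$ recalled in Section~2, by a positive closed current $T$ of bidegree $(1,1)$; this $T$ is nonzero since $[T]=D\neq 0$. Because the intersection number $[T]\cdot[C_s]=D\cdot[C_s]$ is cohomological, the preceding lemma, which asserts that $[T][C_s]=0$ forces $T=0$, gives $D\cdot[C_s]\neq 0$; combined with $D\cdot[C_s]\geq 0$ this yields $D\cdot[C_s]>0$, and by the duality reduction above we conclude $[C_s]\in N_{\mbox{\rm\tiny amp}}$.

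The essential mathematical content is already carried by the preceding lemma, so no new hard estimate is required. The step deserving the most care is the convex-duality characterization of $\mbox{int}(K^*)$, which rests on $\overline{\mbox{Eff}}(X)$ being salient and full-dimensional; the rest is a formal consequence of the nonnegativity of the pairing between movable and pseudoeffective classes together with the current representation of pseudoeffective classes.
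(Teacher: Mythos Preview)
Your proposal is correct and follows essentially the same approach as the paper: both reduce the claim, via the BDPP duality, to the strict inequality $[T]\cdot[C_s]>0$ for every nonzero positive closed $(1,1)$-current $T$, and then deduce this from the preceding lemma (nonvanishing) together with nonnegativity of the pairing. The only difference is cosmetic: the paper cites \cite{BDPP} directly for the characterization ``$\alpha\in N_{\mbox{\rm\tiny amp}}$ iff $[T]\cdot\alpha>0$ for all nonzero $T$'', whereas you spell out the underlying convex-geometry step (interior of $K^*$ equals the strictly positive locus when $K$ is salient and full-dimensional).
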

\begin{proof}
By \cite{BDPP} the class $\alpha$ of a curve belongs to $N_{\mbox{\tiny amp}}$ if and only if for every non-zero 
positive closed current $T$ on $X$ one has $[T]\alpha>0$. For any non-zero positive closed current $T$ 
on $X$ there exists an element $C_s$ such that $f^*T$ is well defined, where $f:C'_s\to C_s$ denotes normalization.
Then the above Lemma yields $[T][C_s]>0$.
\end{proof}

\begin{proof}[Proof of Proposition~\ref{baby}]
From the previous discussion, it is enough to show that any rationally connected manifold $X$ admits good 
families of very free rational curves. 

Take $x_1$ any point of $X$. By \cite[Thm.~IV.3.9]{Kol} there is an 
immersion $f$ of $\P^1$ such that $f(0)=x_1$ and $f^*T_X$ is ample. Let us denote by $\cM_1$ the family of curves determined by the component of the Hom-scheme $\Hom(\P^1,X)$ containing $[f]$. It verifies condition (1) of Definition \ref{def:good} at $x_1$ by construction. Arguing as in \cite[Prop.~2.3]{Hw} one sees that the tangent directions to general deformations of $f$ sending $0$ to $x_1$ fill in almost all of $\P(\Omega_X|_{x_1})$, hence we get condition (2). Finally, note that the differential of the evaluation map is surjective at $(f,0)$, by the ampleness of $f^*T_X$, hence the last condition at $x_1$ is satisfied as well. 

Let us denote by $U_1\subset X$ an open set where the family $\cM_1$ verifies (1), (2) and (3).
For any $x_2\in X\setminus U_1$ we can consider $\cM_2$ and an open set $U_2$ constructed as above. Consider now the family $\cM$ of curves of the form $C_1\cup C_2$, for $[C_i]\in\cM_i$, $i=1,2$. It verfies the properties (1), (2) and (3) for every point in $U_1\cup U_2$. By noetherian recursion the assertion follows.  
\end{proof}

\subsection{The general case}\label{ssec:general}

\begin{proof}[Proof of Theorem~\ref{th:main}]
Suppose that $X$ is uniruled but not rationally connected and denote by $G$ the foliation induced by the MRCF of $X$. We first show that a weakly free curve $C$ exists in $X$ such that $G|_C$ is ample and $[C]\in N_{\mbox{\tiny amp}}$. 

We take a general smooth complete intersection curve $D$ on $X$ avoiding the singularities of $G$. 
Following \cite[II~7]{Kol}, we will consider combs with handle $D$ and $m$ teeth $R_i$ which are smooth free rational curves 
contained in distinct general fibers of the rationally connected fibration,
each $R_i$ meeting $D$ transversely in 
one point $P_i$. By Proposition \ref{baby} we may also assume that $G|_{R_i}$ is ample for all $1\le i\le m$. By \cite[II~7.9-7.10]{Kol}, for arbitrarily large $m$ one can find such a comb $C_0$ which deforms to give a covering family $(C_t)_{t\in T}$ of curves on $X$
such that the general member $C_t$ is smooth and free. Moreover, $[C_t]$ is weakly free and the family $(C_t)_{t\in T}$ is good since the family of complete intersection curves containing $D$ was already good. In particular 
$[C_t]\in N_{\mbox{\tiny amp}}$ by Corollary \ref{cor:good}.

We now prove that $G|_{C_t}$ is ample. Let us denote by
$$
\xymatrix{S\ar[r]^q\ar[d]_p&X\\T}
$$
the desingularization of the smoothing described above, where $T$ is a smooth curve. In particular the central fiber $S_0\cong C_0\cup\bigcup_jE_j$, where $q(E_j)$ is zero-dimensional, hence $q^*(G|_{E_j})$ is trivial for every $j$.

We will consider the relative Harder-Narasimhan filtration 
$$
0=G_0\subsetneq G_1\subsetneq \dots\subsetneq G_k=q^*G
$$
of $q^*G$, see \cite[2.3]{HL}. Over some open subset $U\subset T$, it induces the usual Harder-Narasimhan filtration of $q^*(G|_{C_t})$, $t\in U$. Let $G'$ denote the quotient $q^*G/G_{k-1}$ modulo torsion. It will be enough to show that $\deg_{C_t}(G')$ is positive.

In fact, the morphism $q^*G\to G'$ induces a surjective morphism $$\bigwedge^r q^*G\to\det G'\otimes\cI_Z,$$ where $r$ denotes the rank of $G'$ and $\cI_Z$ is the ideal sheaf of a zero-dimensional subscheme $Z\subset S$. Now, for general $t\in U$
$$
\begin{array}{l}\vspace{0.2cm}\deg_{C_t}(G')=\deg_{C_t}(\det G')=\\=\deg_{D}(\det G')+\sum_{i=1}^m\deg_{R_i}(\det G')+\sum_{j}\deg_{E_j}(\det G').
\end{array}
$$
For each component $B$ of the curve $S_0$ we may consider the image $P_B$ of the natural map $\big(\det G'\otimes\cI_Z\big)|_B\to (\det G')|_B$, which is a quotient of $\left(q^*\bigwedge^rG\right)|_B$ and thus
$$
\deg_B(\det G')\geq\deg(P_B)\geq\mu_{\mbox{\tiny min}}\left(q^*\bigwedge^rG\right)|_B.
$$
The last term is nonnegative for $B=E_j$ and it is at least $1/r$ when $B=R_i$, hence 
$$
\deg_{C_t}(G')\geq \mu_{\mbox{\tiny min}}\left(\bigwedge^rG\right)|_D+\frac{m}{r}
$$
is strictly positive for $m>>0$.

Let as usual $\alpha=[C_t]$ and consider the Harder-Narasimhan filtration of $T_X$ with respect to $\alpha$. By Lemma \ref{lem:contamp} it follows that $G\subset F_s$. If $G\neq F_s$, then by Lemma \ref{lem:notcontamp} the target of the MRCF of $X$ would be uniruled. But this is impossible by \cite{GHS}, thus $G=F_s$ and the Theorem is proved.
\end{proof}

\subsection{Comments on surfaces}\label{ssec:surf}

In the case of surfaces we get the following 

\begin{Cor}
Let $X$ be a uniruled complex projective smooth surface and let $G$ denote the foliation associated with the MRCF of $X$. There exists an ample divisor $C$ on $X$ such that $G|_C$ is ample and $G$ is a member of the Harder-Narasimhan filtration of $T_X$ with respect to $[C]$.
\end{Cor}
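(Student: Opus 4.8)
The plan is to derive this corollary directly from Theorem~\ref{th:main} by showing that on a surface the weakly free movable class produced by the theorem can be taken to be (a multiple of) the class of an ample divisor. The starting point is that Theorem~\ref{th:main} furnishes a class $\alpha\in N_{\mbox{\tiny amp}}$, represented by a reduced movable curve $C$, such that $G|_C$ is ample and $G$ appears in $\mbox{HN}^\alpha(T_X)$. On a surface, $N_1(X)$ and $N^1(X)$ are canonically identified via the intersection form, so a $1$-cycle class and a divisor class live in the same space; the cone $N_{\mbox{\tiny amp}}$ is then nothing but the interior of the dual of the pseudoeffective cone, which for surfaces coincides with the ample cone (since on a surface the closed cone of movable curves equals the nef cone, and its interior is the ample cone). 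Hence the first step is to record that $N_{\mbox{\tiny amp}}$ equals the open ample cone for $\dim X=2$.

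The second step is to produce an actual ample \emph{divisor} $C$ rather than merely a class. Since the ample cone is open and the rational points are dense in it, I would first approximate $\alpha$ by a rational ample class and then clear denominators: there is an ample $\Q$-divisor whose class is arbitrarily close to $\alpha$, and a suitable positive integer multiple of it is an integral very ample divisor $C'$. The point is that both conditions we need are open and preserved under this perturbation. Indeed, ampleness of the restriction $G|_{C'}$ is equivalent to positivity of $\deg_{C'}(G)=G\cdot C'$ (for a line bundle on a smooth curve ampleness is just positive degree, and here $G$ has rank one on a surface quotient setting, or one reduces to the determinant), and this is a strict inequality that persists under small perturbation of the class; likewise the strict inequalities $\mu_1>\mu_2>\dots$ defining the Harder--Narasimhan filtration are open conditions, so the filtration $\mbox{HN}^{\alpha}(T_X)$ is locally constant on a neighborhood of $\alpha$ in $N_{\mbox{\tiny amp}}$ and in particular agrees with $\mbox{HN}^{[C']}(T_X)$ for $C'$ close enough. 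Therefore $G$ remains a member of the Harder--Narasimhan filtration with respect to $[C']$.

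I would then simply take $C=C'$ and invoke Mehta--Ramanathan: for a general member of a very ample linear system the restriction of the Harder--Narasimhan filtration of $T_X$ computes the Harder--Narasimhan filtration of the restriction, so all the slopes and ampleness statements are inherited on $C$ itself. This yields the ample divisor $C$ with $G|_C$ ample and $G\in\mbox{HN}^{[C]}(T_X)$, as required.

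The main obstacle I anticipate is the second step, namely justifying the \emph{local constancy} of the Harder--Narasimhan filtration as the movable class varies near $\alpha$. One must check that the maximally destabilizing subsheaf and the successive quotients do not jump when $\alpha$ is replaced by a nearby rational, and then integral, class; this requires that the finitely many competing subsheaves that could destabilize have slopes depending continuously (indeed linearly) on the class, so that the strict slope inequalities witnessed at $\alpha$ survive a small perturbation. Once one argues that only finitely many saturated subsheaves are relevant to computing the filtration in a neighborhood of $\alpha$ (which follows from boundedness of the destabilizing subsheaves), the conclusion is immediate, but this finiteness/boundedness is the delicate input that makes the perturbation argument legitimate.
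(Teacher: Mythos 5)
Your first step---identifying $N_{\mbox{\tiny amp}}$ with the open ample cone of the surface via the duality of \cite{BDPP}---is exactly the content of the paper's proof, and in fact it already finishes the argument: Theorem~\ref{th:main} does not merely produce a class, it produces a \emph{reduced curve} $C$ representing that class, and on a surface a reduced curve is an effective divisor. Since its numerical class lies in the interior of the nef cone, $C$ is ample by Kleiman's criterion, and the two remaining assertions ($G|_C$ ample and $G$ a member of $\mbox{HN}^{[C]}(T_X)$) are verbatim conclusions of the theorem for this very curve. No approximation of the class, no clearing of denominators, and no Mehta--Ramanathan are needed.

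Your second step is therefore a self-inflicted detour, and it is also where the genuine gaps lie. The local constancy of the Harder--Narasimhan filtration as the polarization varies is precisely the ``delicate input'' you flag at the end; you do not supply the boundedness and finiteness argument for the competing destabilizing subsheaves that would justify it, so as written the perturbation step is incomplete. In addition, your claim that ampleness of $G|_{C'}$ reduces to positivity of $\deg_{C'}(\det G)$ fails in the rationally connected case, where $G=T_X$ has rank two and positivity of the degree of the determinant does not imply ampleness of the restriction; the Mehta--Ramanathan patch you invoke to repair this requires $C'$ to be a general member of a sufficiently high multiple of the polarization, a further condition you would have to track through the argument. All of these difficulties evaporate once you observe that the movable curve furnished by Theorem~\ref{th:main} is itself the ample divisor the corollary asks for.
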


\begin{proof}
Since $N_{\mbox{\tiny amp}}$ coincides in this case with the cone of ample divisors of $X$, the statement follows from Theorem \ref{th:main}.
\end{proof}

Note that even for a rational surface $X$, it is not obvious a priori how to find an ample divisor $C$ verifying $-K_X\cdot C>0$. The following example was shown to us by F. Campana.

\begin{Ex}\label{ex:surf}
{\rm Let $X$ be the blow-up of $\P^2$ at sixteen general points $P_1,\dots,P_{16}$ and let $C$ be the strict transform of a general quintic passing through $P_1,\dots,P_{16}$. It is easy to check that $C$ is a very ample divisor on $X$ but $-K_X\cdot C=-1$.}
\end{Ex}


\hrule \medskip
\par\noindent

Addresses:

\par\noindent{\it Matei Toma}: Institut de Math\'ematiques Elie Cartan,
Nancy-Universit\'e,\\
B.P. 239,
54506 Vandoeuvre-l\`es-Nancy Cedex,
France \\ and
Institute of Mathematics of the Romanian Academy.\\
{\tt toma@iecn.u-nancy.fr}

\par\noindent{\it Luis E. Sol\'a Conde}: Departamento de Matem\'aticas, Universidad Rey Juan Carlos,\\
C. Tulip\'an s.n. 28933, M\'ostoles, Madrid. Spain.\\
{\tt luis.sola@urjc.es}

\end{document}